\documentclass[english,letterpaper,11pt,reqno]{amsart}
\usepackage[backend=bibtex,style=alphabetic,maxnames=6]{biblatex}
\usepackage{appendix}
\usepackage{amsbsy}
\usepackage{scalerel}
\usepackage{tcolorbox}
\usepackage{amsfonts}
\usepackage{amsmath}
\usepackage{amssymb}
\usepackage{amsthm}
\usepackage{graphicx}
\usepackage{ifthen}
\usepackage{textcomp}
\usepackage{enumitem, color, amssymb}
\usepackage{dsfont}
\usepackage{mathrsfs}
\usepackage{calrsfs}
\usepackage[bookmarksnumbered,colorlinks]{hyperref}
\hypersetup{colorlinks=true, linkcolor=blue, citecolor=blue}
\usepackage{cancel}
\usepackage{setspace}

\newcommand{\lra}{\longrightarrow}

\newcommand{\RR}{\mathbb{R}}

\makeatletter
\newcommand*{\defeq}{\mathrel{\rlap{%
                     \raisebox{0.25ex}{$\m@th\cdot$}}%
                     \raisebox{-0.25ex}{$\m@th\cdot$}}%
                     =}
\makeatother

\makeatletter
\newcommand*\owedge{\mathpalette\@owedge\relax}
\newcommand*\@owedge[1]{%
  \mathbin{%
    \ooalign{%
      $#1\m@th\bigcirc$\cr
      \hidewidth$#1\m@th\wedge$\hidewidth\cr
    }%
  }%
}
\makeatother
\newcommand{\kn}{\ensuremath{\raisebox{.04 em}{\,${\scriptstyle \owedge}$\,}}}  

\newtheorem{thm}{Theorem}

\newtheorem{cor}{Corollary}

\newtheorem{prop}{Proposition}
\newtheorem*{definition-non}{Definition}
\newtheorem*{theorem-non}{Theorem}
\newtheorem*{proposition-non}{Proposition}
\newtheorem*{lemma-non}{Lemma}
\newtheorem*{corollary-non}{Corollary}

\newcommand{\beqa}{\begin{eqnarray}}
\newcommand{\beq}{\begin{equation}}
\newcommand{\eeqa}{\end{eqnarray}}
\newcommand{\eeq}{\end{equation}}

\newcommand\ipr[2]{\langle {#1},{#2}\rangle}

\newcommand\ww[2]{#1 \wedge #2}

\newcommand\imp{\hspace{.2in}\Rightarrow\hspace{.2in}}

\newcommand\Hess{\text{Hess}}
\newcommand\what{\hat}

\newcommand\Ric{\text{Ric}}

\newcommand\Rmr{\text{Rm}_{\scalebox{0.6}{$g$}}}

\newcommand\comma{\hspace{.2in},\hspace{.2in}}
\newcommand\commas{\hspace{.1in},\hspace{.1in}}

\newcommand\hsr{*}

\newcommand\co{\hat{R}}

\newcommand\Rsec{\text{sec}_{\scalebox{0.6}{$g$}}}

\newtheorem{innercustomgeneric}{\customgenericname} 
\providecommand{\customgenericname}{}
\newcommand{\newcustomtheorem}[2]{%
  \newenvironment{#1}[1]
  {%
   \renewcommand\customgenericname{#2}%
   \renewcommand\theinnercustomgeneric{##1}%
   \innercustomgeneric
  }
  {\endinnercustomgeneric}
}

\makeatletter
\def\Ddots{\mathinner{\mkern1mu\raise\p@
\vbox{\kern7\p@\hbox{.}}\mkern2mu
\raise4\p@\hbox{.}\mkern2mu\raise7\p@\hbox{.}\mkern1mu}}
\makeatother

\newcustomtheorem{customthm}{Theorem}
\newcustomtheorem{customlemma}{Lemma}

\begin{document}
\title[]{On normal forms of gradient Ricci 4-solitons}
\author[]{Amir Babak Aazami}
\address{Clark University\hfill\break\indent
Worcester, MA 01610}
\email{aaazami@clarku.edu}

\maketitle
\begin{abstract}
In this note we analyze the normal form of the operator \mbox{$\hat{R} + \frac{1}{2}\hat{H}$} of a gradient Ricci 4-soliton in \cite{cao}. In particular, we show that the curvature operator $\co$ of the Koiso--Cao soliton inherits this normal form. By \cite{johnson}, this yields a normal form for the curvature operator of the Koiso--Cao soliton relative to the space of algebraic K\"ahler curvature operators.
\end{abstract}

\section{Introduction}
The concept of a \emph{normal form} for a Riemannian manifold \cite{berger,thorpe2} is motivated by a well known fact from linear algebra, namely, that on an inner product space $(V,\ipr{\,}{})$, a self-adjoint linear map $T\colon V \lra V$ is completely determined by the critical points and values of its quadratic form $v \mapsto \ipr{Tv}{v}$ restricted to the unit sphere. The relevance of this fact for Riemannian geometry is that the quadratic form of the curvature operator $\co\colon \Lambda^2 \lra \Lambda^2$ of a Riemannian manifold $(M,g)$ is (up to sign) precisely the sectional curvature: For any orthonormal pair $x,y \in T_pM$,
$$
\ww{x}{y} \mapsto \ipr{\co(\ww{x}{y})}{\ww{x}{y}} = \underbrace{\,\Rmr(x,y,x,y)\,}_{\text{$-\Rsec(\ww{x}{y})$}}.
$$
Because the sectional curvature $\Rsec$ determines the curvature tensor $\Rmr$, a natural question arises: When is $\co$ determined by \emph{just} the critical points and values of $\Rsec$ (relative to some class of algebraic curvature operators)? Here the latter is to be thought of as a smooth function \mbox{$\Rsec\colon \text{Gr}_2(T_pM)\lra \RR$} on the Grassmannian of 2-planes at $T_pM$. If so, then we say that the curvature operator has a ``normal form" at $p \in M$; this is a pointwise condition.
\vskip 6pt
Space forms, locally conformally flat manifolds, any Riemannian submanifold of a space form with a flat normal bundle\,---\,all of these have normal forms. By \cite{johnson}, so do K\"ahler 4-manifolds with positive-definite Ricci tensor (relative to the space of algebraic K\"ahler curvature operators).  But the most famous instances of normal forms are the class of \emph{Einstein} curvature operators in dimension 4, because here the normal form is derived from the fact that $\co$ commutes with the Hodge star operator $*$:
$$
\Ric = \lambda g \iff \hsr\co = \co\hsr.
$$
Hence $\co,*$ have a common orthonormal basis of eigenvectors. This fact, together with the splitting of $\Lambda^2 = \Lambda^+ \oplus \Lambda^-$ into the self-dual $(\Lambda^+)$ and anti-self-dual $(\Lambda^-)$ eigenspaces of $*$, and the fact that a 2-plane $P$ is a critical point of $\Rsec$ if and only if $\co P \in \text{span}\{P,*P\}$, was used to derive an orthonormal basis $\{e_1,e_2,e_3,e_4\} \subseteq T_pM$ all of whose corresponding 2-planes $\ww{e_i}{e_j}$ are critical points of $\Rsec$, with respect to which
\beqa
\label{eqn:AB}
\co = \begin{bmatrix}A_0&B_0\\B_0&A_0\end{bmatrix} \commas A_0 \defeq \text{diag}(\lambda_1,\lambda_2,\lambda_3) \commas B_0 \defeq \text{diag}(\mu_1,\mu_2,\mu_3).
\eeqa
Here the $\lambda_i$ are critical values of $\Rsec$, and the $\mu_i$ are not critical values but can be derived from them (with $\mu_1+\mu_2+\mu_3=0$ by the algebraic Bianchi identity). Furthermore, two Einstein operators with the same critical points and values are necessarily pointwise isometric; hence, within the Einstein class, the operator is determined up to orthogonal change by this data.
\vskip 6pt
The purpose of this note is to analyze another very interesting and recent example of a normal form in dimension 4. Namely, \cite{cao} showed that a normal form also exists for \emph{gradient Ricci 4-solitons},
\beqa
\label{eqn:sol}
\Ric + \text{Hess}f = \lambda g,
\eeqa
although not for the curvature operator $\co$ itself, but rather for the modified operator
$\hat{S} \defeq \co + \frac{1}{2}\hat{H}\colon\Lambda^2\lra \Lambda^2$, where $H \defeq \text{Hess}f \kn g$ is given by
\beqa
\ipr{\hat{H}(\ww{e_i}{e_j})}{\ww{e_k}{e_l}} \!\!&\defeq&\!\! H(e_i,e_j,e_k,e_l)\nonumber\\
 &=&\!\! \text{Hess}f(e_i,e_l)\,g_{jk} + \text{Hess}f(e_j,e_k)\,g_{il}\nonumber\\
&&\hspace{.2in}-\,\text{Hess}f(e_i,e_k)\,g_{jl} - \text{Hess}f(e_j,e_l)\,g_{ik}.\nonumber
\eeqa
In \cite[Remark~2.6]{cao}, it was asked whether $\co$ inherits the normal form of $\hat{S}$. We show that in at least one instance, the answer is yes:
\begin{theorem-non}
The curvature operator of the Koiso--Cao soliton on $\mathbb{CP}^2\#(-\mathbb{CP}^2)$ inherits the normal form of $\hat{S}$. As a consequence, the curvature operator of the Koiso--Cao soliton has a normal form relative to the space of algebraic K\"ahler curvature operators.
\end{theorem-non}

The Koiso--Cao soliton \cite{koiso,cao1996} will be analyzed in Theorem \ref{thm:KC} and Corollary \ref{cor:Johnson} in Section \ref{sec:proof} below.

\section{Overview of the normal form of $\hat{S} = \co + \frac{1}{2}\hat{H}$}
First, let $\{e_1,e_2,e_3,e_4\} = \{\partial_1,\partial_2,\partial_3,\partial_4\}$ arise from normal coordinates centered at $p \in M$, so that $\text{Hess}f(e_i,e_j)\big|_p = f_{ij}\big|_p$ and hence
\beqa
\label{eqn:Hcoord}
H_{ijkl}\big|_p = f_{il}\delta_{jk} + f_{jk}\delta_{il}-f_{ik}\delta_{jl} - f_{jl}\delta_{ik}\,\big|_p.
\eeqa
What makes the normal form of $\hat{S}$ particularly interesting is that it is ``Einstein-like," namely, the normal form arises from the fact that $\hat{S}$ commutes with $*$,
$\hsr \hat{S} = \hat{S} \hsr,$
and so the normal form basis $\{\ww{e_i}{e_j}\} \subseteq \Lambda^2(T_pM)$ of critical 2-planes is derived just as it was for $\co$ in \cite{berger,thorpe2} in the Einstein case (once again, a 2-plane $P$ will be a critical point of this quadratic form if and only if $\hat{S}P \in \text{span}\{P,*P\}$, by the same proof as in \cite{thorpe2}).  Of course, the quadratic form here is not the sectional curvature $\Rsec$, but rather that of $\hat{S}$:
\beqa
\ww{e_i}{e_j} \mapsto \ipr{\hat{S}(\ww{e_i}{e_j})}{\ww{e_i}{e_j}} \!\!&=&\!\! \ipr{\co(\ww{e_i}{e_j})}{\ww{e_i}{e_j}} + \frac{1}{2}\ipr{\hat{H}(\ww{e_i}{e_j})}{\ww{e_i}{e_j}}\nonumber\\
&=&\!\! \Rsec(\ww{e_i}{e_j}) -f_{ik}\delta_{jl} - f_{jl}\delta_{ik}.\nonumber
\eeqa
We would like to investigate what $\hat{S}$'s normal form says about $\co$. To do so, let us use the \emph{Hodge basis} $\{\xi_1^+,\xi_2^+,\xi_3^+,\xi_1^-,\xi_2^-,\xi_3^-\}$, where
$$
\Big\{\underbrace{\,\frac{1}{\sqrt{2}}(\ww{e_1}{e_2}\pm\ww{e_3}{e_4})\,}_{\text{$\xi_1^{\pm}$}},\underbrace{\,\frac{1}{\sqrt{2}}(\ww{e_1}{e_3}\pm\ww{e_4}{e_2})\,}_{\text{$\xi_2^{\pm}$}},\underbrace{\,\frac{1}{\sqrt{2}}(\ww{e_1}{e_4}\pm\ww{e_2}{e_3})\,}_{\text{$\xi_3^{\pm}$}}\Big\}
$$
comprise the self-dual and anti-self-dual eigenvectors of $*$ \mbox{($*\xi_i^{\pm}=\pm \xi_i^{\pm}$)}.  With respect to this basis,
$$
\co = \begin{bmatrix}\hat{W}^++\frac{\text{scal}}{12}I & K\\K^t & \hat{W}^-+\frac{\text{scal}}{12}I\end{bmatrix} \comma \hat{H} = \begin{bmatrix}-\frac{\Delta f}{2}I & C\\C^t & -\frac{\Delta f}{2}I\end{bmatrix},
$$
where $I$ is the $3 \times 3$ identity matrix, $\text{scal}$ is the scalar curvature of $g$ at $p$, $\hat{W}^{\pm}$ are the self-dual and anti-self dual portions of the Weyl curvature operator $\hat{W}$ that leave invariant the $\pm1$-eigenspaces of $*$, and
$$
-\frac{1}{2}K = C \defeq \begin{bmatrix}\frac{-f_{11}-f_{22}+f_{33}+f_{44}}{2} & f_{14}-f_{23}& -f_{13}-f_{24}\\
-f_{14}-f_{23} & \frac{-f_{11}+f_{22}-f_{33}+f_{44}}{2} & f_{12}-f_{34}\\
f_{13}-f_{24} & -f_{12}-f_{34}& \frac{-f_{11}+f_{22}+f_{33}-f_{44}}{2}\end{bmatrix}\cdot
$$
(See \cite[Lemma~2.3]{cao}, though note that in the latter the opposite sign convention for $\kn$ is being used.) When $\hat{S} = \co + \frac{1}{2}\hat{H}$ is expressed with respect to this Hodge basis, we have
\beqa
\label{eqn:CTsimple}
\hat{S} = \begin{bmatrix}A+B & O\\O&A-B\end{bmatrix} \imp \co = \begin{bmatrix}A+B+\frac{\Delta f}{4}I & -\frac{1}{2}C\\-\frac{1}{2}C^t&A-B+\frac{\Delta f}{4}I\end{bmatrix},
\eeqa
where $A \defeq \text{diag}(a_1,a_2,a_3)$ and $B \defeq \text{diag}(b_1,b_2,b_3)$. (If $\hat{S}$ had been expressed with respect to $\{\ww{e_i}{e_j}\}$ directly, then $\hat{S} = \begin{bmatrix}A&B\\B&A\end{bmatrix}$, similarly to \eqref{eqn:AB}.) 
With these preliminaries established, we now explore some properties of this $\hat{S}$-normal form basis $\{\ww{e_i}{e_j}\}$ and its relation to $\co$.

\section{Properties of the normal form of $\hat{S} = \co + \frac{1}{2}\hat{H}$}
Unfortunately, the existence of $\{\ww{e_i}{e_j}\}$ does not in general yield any topological information that was not otherwise known directly from the Chern-Gauss-Bonnet formula for $\chi(M)$ in dimension 4 (even though a normal form-argument was used in deriving the Hitchin-Thorpe inequality in \cite{hitchin}, it was not essential; see Section \ref{sec:CGB} below). Having said that, here is one case in which $\hat{S}$'s normal form basis does yield topological information:
\begin{prop}
\label{prop:pure}
If the normal form $\{\ww{e_i}{e_j}\}$ for $\hat{S}$ is ``pure", meaning that the $\ww{e_i}{e_j}$ are eigenvectors of $\hat{S}$, then $\tau(M) = 0$.
\end{prop}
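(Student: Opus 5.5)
The plan is to show that purity forces $\hat{W}^+ = \hat{W}^-$ pointwise, and then to conclude with the Hirzebruch signature formula. Throughout, $M$ is assumed closed and oriented, as is implicit in speaking of $\tau(M)$, and the hypothesis is assumed to hold at every point $p \in M$.

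First, I translate purity into a condition on the blocks of $\hat{S}$. With respect to $\{\ww{e_i}{e_j}\}$ we have $\hat{S} = \begin{bmatrix}A&B\\B&A\end{bmatrix}$ with $A,B$ diagonal, as noted after \eqref{eqn:CTsimple}. Here the diagonal entry $b_i$ of $B$ pairs $\ww{e_i}{e_j}$ with its Hodge dual $*(\ww{e_i}{e_j})$. So $\hat{S}(\ww{e_1}{e_2}) = a_1\,\ww{e_1}{e_2} + b_1\,\ww{e_3}{e_4}$, and similarly for the other five basis 2-vectors. Each $\ww{e_i}{e_j}$ is an eigenvector exactly when the corresponding $b$ vanishes. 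Purity therefore gives $B = O$ at every point.

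Next, I substitute $B = O$ into \eqref{eqn:CTsimple}. In the Hodge basis the two diagonal blocks of $\co$ become
$$
\hat{W}^+ + \tfrac{\text{scal}}{12}I = A + \tfrac{\Delta f}{4}I = \hat{W}^- + \tfrac{\text{scal}}{12}I .
$$
Subtracting gives $\hat{W}^+ = \hat{W}^-$ as $3\times 3$ matrices. More precisely, this holds with respect to the bases $\{\xi_i^+\}$ and $\{\xi_i^-\}$, which are orthonormal in $\Lambda^+$ and $\Lambda^-$ respectively. Since the traceless parts agree, the scalar parts are consistent, namely $\text{scal}/4 = \text{tr}A + 3\Delta f/4$. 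In particular $|W^+|^2 = |W^-|^2$ at $p$; for this step we only need that the two matrices have the same Frobenius norm.

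Finally, the Hirzebruch signature theorem in dimension 4 gives
$$
\tau(M) = \frac{1}{12\pi^2}\int_M \big(|W^+|^2 - |W^-|^2\big)\,dV_g .
$$
The integrand vanishes identically, so $\tau(M) = 0$.

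The only point needing care is the first step. One must check that, in the $\{\ww{e_i}{e_j}\}$ description, the off-diagonal block $B$ couples each 2-plane only with its Hodge dual, so that being an eigenvector is equivalent to $b_i = 0$. This follows from $\hat{S}$ commuting with $*$, together with the diagonal form of $A \pm B$ in the Hodge basis. A second, minor point is the normalization convention for $|W^\pm|^2$ in the signature formula. It is harmless, because only the difference $|W^+|^2 - |W^-|^2$ enters, and that difference vanishes pointwise.
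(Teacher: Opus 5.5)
Your proof is correct and follows the same route as the paper: purity forces $B=0$, which makes the Hirzebruch integrand $|W^+|^2-|W^-|^2$ vanish pointwise. The paper expands that integrand as $4\sum_i b_i\bigl(a_i+\frac{\Delta f}{4}-\frac{\text{scal}}{12}\bigr)$ before setting $b_i=0$, whereas you observe directly that $\hat W^+=\hat W^-$ as matrices once $B=0$; these are the same cancellation.
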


\begin{proof}
As we saw above, the normal form basis $\{\ww{e_i}{e_j}\}$ yields
$$
\hat{S} = \begin{bmatrix}A&B\\B&A\end{bmatrix} \comma A = \text{diag}(a_1,a_2,a_3) \comma B = \text{diag}(b_1,b_2,b_3).
$$
In terms of these, the signature is
$$
\tau(M) = \frac{1}{3\pi^2}\int_M \sum_{i=1}^3b_i\Big(a_i+\frac{\Delta f}{4}-\frac{\text{scal}}{12}\Big)dV_{\scalebox{0.6}{$g$}}.
$$
(For the derivation of this formula, see \eqref{eqn:tau1} below.) But if the $\ww{e_i}{e_j}$ are eigenvectors of $\hat{S}$, then $b_1=b_2=b_3=0$. 
\end{proof}

(The Koiso--Cao soliton on $\mathbb{CP}^2\#(-\mathbb{CP}^2)$ has $\tau=0$.) Rather, the more interesting question is whether the normal form for $\hat{S}$ is sufficient to yield a normal form for $\co$, as was posed in \cite{cao}. Here something can be said, though once again in a special case. First, let us recall that, with respect to our normal form basis $\{\ww{e_i}{e_j}\}$,
$$
*(\ww{e_1}{e_2}) = \ww{e_3}{e_4} \commas *(\ww{e_1}{e_3}) = \ww{e_4}{e_2} \commas *(\ww{e_1}{e_4}) = \ww{e_2}{e_3}.
$$
Using \eqref{eqn:Hcoord}, and setting $e_{ij} \defeq \ww{e_i}{e_j}$, a direct calculation gives
\beqa
\hat{H}(e_{12})\!\!&=&\!\!-(f_{11}+f_{22})e_{12}-f_{23}e_{13}-f_{24}e_{14}-f_{14}e_{42}+f_{13}e_{23},\nonumber\\
\hat{H}(e_{13})\!\!&=&\!\!-(f_{11}+f_{33})e_{13}-f_{23}e_{12}+f_{34}e_{14}+f_{12}e_{42}-f_{14}e_{34},\nonumber\\
\hat{H}(e_{14})\!\!&=&\!\!-(f_{11}+f_{44})e_{14}-f_{24}e_{12}-f_{34}e_{13}+f_{12}e_{23}+f_{13}e_{34}.\nonumber
\eeqa
This allows us to determine when the basis $\{\ww{e_i}{e_j}\}$ will also be critical for the sectional curvature $\Rsec$:

\begin{prop}\label{mainthm}
Let $\{\ww{e_i}{e_j}\}$ be the normal form basis for $\hat{S}$ as in \emph{\cite{cao}}, and set $P_1 \defeq \ww{e_1}{e_2}$. Then the following are equivalent:
\begin{itemize}
\item[i.] $P_1$ is a critical point of \emph{$\Rsec$}.
\item[ii.] $f_{13}=f_{14}=f_{23}=f_{24}=0$.
\item[iii.] \emph{$\Ric(P_1,*P_1)=0$}.
\end{itemize}
The analogous statements hold cyclically for $P_2\defeq\ww{e_1}{e_3}$ and \mbox{$P_3\defeq \ww{e_1}{e_4}$.} Furthermore, if $\mathcal O_i$ denotes the orthogonal projection of $\what R(P_i)$ onto $\operatorname{span}\{P_i,*P_i\}^{\perp}$, then
\begin{align*}
|\mathcal O_1|^2&=\tfrac14\bigl(f_{13}^2+f_{14}^2+f_{23}^2+f_{24}^2\bigr),\\
|\mathcal O_2|^2&=\tfrac14\bigl(f_{12}^2+f_{14}^2+f_{23}^2+f_{34}^2\bigr),\\
|\mathcal O_3|^2&=\tfrac14\bigl(f_{12}^2+f_{13}^2+f_{24}^2+f_{34}^2\bigr).
\end{align*}
Hence
\beqa
\label{obstruction}
|\mathcal O_1|^2+|\mathcal O_2|^2+|\mathcal O_3|^2
=\frac12\sum_{i<j}f_{ij}^2
=\frac12\sum_{i<j}\emph{\Ric}(e_i,e_j)^2.
\eeqa
\end{prop}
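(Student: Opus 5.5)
The plan is to write $\hat{R} = \hat{S} - \frac{1}{2}\hat{H}$ and use that the normal form basis is critical for $\hat{S}$. Then compute the component of $\hat{R}(P_1)$ orthogonal to $\operatorname{span}\{P_1,*P_1\}$ directly from the formulas for $\hat{H}(e_{12})$ displayed above.

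By the characterization recalled from \cite{thorpe2}, a 2-plane $P$ is critical for $\Rsec$ if and only if $\hat{R}P \in \operatorname{span}\{P,*P\}$, that is, if and only if $\mathcal O_1 = 0$ for $P=P_1$. Since $\{e_{ij}\}$ is the $\hat{S}$-normal form basis, we have $\hat{S}(e_{12}) = a_1 e_{12} + b_1 e_{34}$, which lies in $\operatorname{span}\{P_1,*P_1\}$. Hence the orthogonal projection of $\hat{R}(P_1)$ onto $\operatorname{span}\{P_1,*P_1\}^\perp$ equals that of $-\frac{1}{2}\hat{H}(e_{12})$.

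Reading off the displayed formula, $\hat{H}(e_{12})$ has no $e_{34}$-component. Its components along the orthonormal vectors $e_{13},e_{14},e_{42},e_{23}$ are $-f_{23},-f_{24},-f_{14},f_{13}$. Therefore
\[
\mathcal O_1 = \tfrac12\bigl(f_{23}e_{13}+f_{24}e_{14}+f_{14}e_{42}-f_{13}e_{23}\bigr),
\qquad
|\mathcal O_1|^2=\tfrac14\bigl(f_{13}^2+f_{14}^2+f_{23}^2+f_{24}^2\bigr).
\]
This gives the equivalence (i)$\iff$(ii). The formulas for $\mathcal O_2,\mathcal O_3$ follow in the same way from the displayed $\hat{H}(e_{13})$ and $\hat{H}(e_{14})$, using $\hat{S}(e_{13})\in\operatorname{span}\{e_{13},e_{42}\}$ and $\hat{S}(e_{14})\in\operatorname{span}\{e_{14},e_{23}\}$. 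One checks that the $e_{42}$ (respectively $e_{23}$) coefficient does not appear, and that the remaining four coefficients are as claimed.

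Summing the three, each off-diagonal $f_{ij}$ with $i<j$ appears exactly twice. For example, $f_{12}$ appears in $\mathcal O_2$ and $\mathcal O_3$, and $f_{13}$ in $\mathcal O_1$ and $\mathcal O_3$. This gives $\frac12\sum_{i<j}f_{ij}^2$. By the soliton equation \eqref{eqn:sol}, $\Ric(e_i,e_j) = -f_{ij}$ for $i\neq j$, which yields \eqref{obstruction}.

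For (iii), I interpret $\Ric(P_1,*P_1)$ as the Ricci tensor acting on pairs, i.e. $\Ric\kn g$ evaluated on $(e_1,e_2,e_3,e_4)$, or equivalently the off-diagonal block pairing $\langle (\Ric\kn g)^\wedge e_{12},e_{34}\rangle$. I will have to fix this interpretation against \cite{cao}'s conventions; this is the main uncertainty. Expanding $\Ric\kn g$ at $(e_1,e_2,e_3,e_4)$ gives zero identically, because all Kronecker deltas vanish. So (iii) presumably means the vanishing of the mixed Ricci components linking $\{e_1,e_2\}$ to $\{e_3,e_4\}$, namely $\Ric(e_a,e_b)=0$ for $a\in\{1,2\}$, $b\in\{3,4\}$.

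Under that reading, $\Ric(e_a,e_b) = -f_{ab}$ for $a\neq b$ by \eqref{eqn:sol}, so (iii) is literally (ii). Pinning down the intended meaning of $\Ric(P_1,*P_1)$ so that this identification is exact is the step I expect to require care. Everything else is a direct bookkeeping computation.
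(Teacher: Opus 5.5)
Your route is the same as the paper's: write $\co=\hat S-\tfrac12\hat H$, use $\hat S(P_i)\in\operatorname{span}\{P_i,*P_i\}$ together with Thorpe's criterion, and read off the off-span part of $\hat H(P_i)$. Your treatment of $P_1$ is correct. Your reading of (iii) is also the one the paper uses: its proof only invokes $\Ric(e_i,e_j)=-f_{ij}$ for $i\neq j$, with $P_1$ and $*P_1$ regarded as the planes $\operatorname{span}\{e_1,e_2\}$ and $\operatorname{span}\{e_3,e_4\}$, so that uncertainty is resolved.

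The step that fails as written is the one for $P_2,P_3$. You say the formulas ``follow in the same way from the displayed $\hat H(e_{13})$ and $\hat H(e_{14})$'' and that ``one checks'' the $e_{42}$ (resp.\ $e_{23}$) coefficient is absent. It is not absent in those displays. They contain $+f_{12}e_{42}$ and $+f_{12}e_{23}$, i.e.\ an $f_{12}$-term along $*P_2$ and along $*P_3$. Read literally, they give $|\mathcal O_2|^2=\tfrac14(f_{14}^2+f_{23}^2+f_{34}^2)$ and $|\mathcal O_3|^2=\tfrac14(f_{13}^2+f_{24}^2+f_{34}^2)$. Then $f_{12}$ drops out of the cyclic versions of (ii) and out of \eqref{obstruction}, contradicting your later count ``$f_{12}$ appears in $\mathcal O_2$ and $\mathcal O_3$.'' The paper's own ``by inspection'' has the same problem.

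The statement is nonetheless true; those two displays are miscomputed, and you need to redo them from \eqref{eqn:Hcoord}. Every term of $H_{ijkl}$ carries a Kronecker delta joining an index of $\{i,j\}$ to one of $\{k,l\}$. So $H_{ijkl}=0$ whenever $\{i,j\}\cap\{k,l\}=\emptyset$, and $\hat H(P)$ never has a $*P$-component for a coordinate plane $P$. For $a\notin\{i,j\}$, the coefficient is $-f_{ja}$ on $e_{ia}$ and $f_{ia}$ on $e_{ja}$, which yields
\begin{align*}
\hat H(e_{13})&=-f_{23}e_{12}-(f_{11}+f_{33})e_{13}-f_{34}e_{14}-f_{12}e_{23}+f_{14}e_{34},\\
\hat H(e_{14})&=-f_{24}e_{12}-f_{34}e_{13}-(f_{11}+f_{44})e_{14}+f_{12}e_{42}-f_{13}e_{34}.
\end{align*}
With these, the obstruction set for $e_{ij}$ is exactly the four mixed entries $f_{ia},f_{ja}$, with $a$ in the complementary pair. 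This gives the stated $|\mathcal O_2|^2$ and $|\mathcal O_3|^2$, and your double-counting argument for \eqref{obstruction} then goes through.
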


\begin{proof}
Because $\what S(P_1)=a_1P_1+b_1(*P_1)$, the 2-plane $P_1$ is critical for $\Rsec$ if and only if $\what H(P_1)$ has no component in $\operatorname{span}\{e_{13},e_{14},e_{42},e_{23}\}$; this is ii. The equivalence with iii.~follows from \eqref{eqn:sol}, which yields $\Ric(e_i,e_j)=-f_{ij}$ for $i\neq j$. The formulas for $|\mathcal O_i|^2$ follow by inspection, and summing them shows that every off-diagonal entry appears exactly twice.
\end{proof}

\begin{cor}\label{sharedframe}
Let $P_1,P_2,P_3$ be as in Proposition \ref{mainthm}. Then the following are equivalent:
\begin{itemize}
\item[i.] $P_1,P_2,P_3$ are all critical for \emph{$\Rsec$}.
\item[ii.] \emph{$\Hess f$} is diagonal with respect to $\{\ww{e_i}{e_j}\}$.
\item[iii.] \emph{$\Ric$} is diagonal with respect to $\{\ww{e_i}{e_j}\}$.
\end{itemize}
Under these conditions, $\what R$ admits a normal form in the same frame.
\end{cor}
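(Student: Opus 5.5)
The plan is to derive the corollary directly from Proposition \ref{mainthm}, applied to each of $P_1,P_2,P_3$ in turn, and then read off the matrix of $\co$ from \eqref{eqn:CTsimple}.

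For the equivalence (i)$\iff$(ii), apply the cyclic versions of Proposition \ref{mainthm}. Criticality of $P_1$ is equivalent to $f_{13}=f_{14}=f_{23}=f_{24}=0$. The vanishing sets for $P_2$ and $P_3$ are read off from the displayed formulas for $|\mathcal O_2|^2$ and $|\mathcal O_3|^2$: they are $\{f_{12},f_{14},f_{23},f_{34}\}$ and $\{f_{12},f_{13},f_{24},f_{34}\}$. All three $P_i$ are therefore critical if and only if $|\mathcal O_1|^2+|\mathcal O_2|^2+|\mathcal O_3|^2=0$. By \eqref{obstruction} this holds if and only if $\sum_{i<j}f_{ij}^2=0$, that is, $\Hess f$ is diagonal in $\{e_1,\dots,e_4\}$.

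The equivalence (ii)$\iff$(iii) follows from \eqref{eqn:sol}. Since $g_{ij}=\delta_{ij}$ in the frame, $\Ric(e_i,e_j)=-f_{ij}$ for $i\neq j$, so the off-diagonal entries of $\Ric$ and $\Hess f$ vanish together.

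For the normal form claim, substitute $f_{ij}=0$ ($i\neq j$) into the matrix $C$. It becomes diagonal:
$$C=\tfrac12\,\text{diag}\big(-f_{11}-f_{22}+f_{33}+f_{44},\ -f_{11}+f_{22}-f_{33}+f_{44},\ -f_{11}+f_{22}+f_{33}-f_{44}\big),$$
and $C^t=C$. By \eqref{eqn:CTsimple}, $\co$ in the Hodge basis then has all four $3\times3$ blocks diagonal. Converting back to the basis $\{e_{12},e_{13},e_{14},e_{34},e_{42},e_{23}\}$ via $\xi_i^\pm=(P_i\pm *P_i)/\sqrt2$, this means $\co(P_i)\in\operatorname{span}\{P_i,*P_i\}$ for each $i$. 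The same then holds for $*P_i$, since $*P_i$ is the corresponding $\pm$-combination.

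Written out in the $\{e_{ij}\}$ basis, $\co$ has the block form
$$\co=\begin{bmatrix}A_1&B_1\\B_1&A_2\end{bmatrix},$$
where each block is diagonal. The diagonal entries of $A_1$ and $A_2$ are the sectional curvatures of the six coordinate planes. By Thorpe's criterion, quoted in the introduction, each of these planes is a critical point of $\Rsec$. Hence $\co$ is determined by the critical values of $\Rsec$ at these planes, together with the off-diagonal terms. The off-diagonal terms are fixed by $\hat S$'s $b_i$ and the diagonal $C$ entries.

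The main subtlety is the following. Unlike the Einstein case, $A_1\neq A_2$ in general: the diagonal of $C$ contributes to the $\xi^+$ and $\xi^-$ blocks with opposite sign across $P_i$ versus $*P_i$. So what is inherited is the weaker "diagonal block" normal form, in which every coordinate plane is critical, rather than the symmetric form \eqref{eqn:AB}. I would check the $\xi^\pm\to e_{ij}$ conversion carefully to confirm that $*P_i$ is also critical and not just $P_i$. This follows from the cyclic statement of Proposition \ref{mainthm}, because $*P_i=e_{34},e_{42},e_{23}$ have the same vanishing conditions.
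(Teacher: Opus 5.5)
Your proposal is correct and is essentially the paper's own (implicit) argument. The three obstruction sets of Proposition~\ref{mainthm} together exhaust all off-diagonal $f_{ij}$, the soliton equation gives $\Ric(e_i,e_j)=-f_{ij}$ for $i\neq j$, and your diagonal-$C$ computation makes explicit the final sentence, which the paper asserts without proof. One small correction: in the $\{e_{ij}\}$ basis the off-diagonal block of $\co$ is exactly $B=\text{diag}(b_1,b_2,b_3)$, while $C$ enters only the diagonal blocks, as $A+\frac{\Delta f}{4}I\mp\frac{1}{2}C$ on the $P_i$ and $*P_i$ entries respectively.
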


In other words, the off-diagonal Ricci tensor is the exact obstruction to the normal form basis $\{\ww{e_i}{e_j}\}$ of $\hat{S}$ being a normal form basis for $\co$. Interestingly, in at least one well known case there is no such obstruction:

\begin{cor}
The Koiso--Cao soliton on $\mathbb{CP}^2\#(-\mathbb{CP}^2)$ satisfies Corollary \ref{sharedframe}. Thus its curvature operator admits a normal frame in the same frame.
\end{cor}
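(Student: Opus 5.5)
The plan is to reduce the statement to Corollary \ref{sharedframe}(iii). It then suffices to exhibit, at each point, a normal form basis $\{e_1,e_2,e_3,e_4\}$ for $\hat S$ in which $\Ric$ (equivalently $\Hess f$) is diagonal. I will work with the explicit cohomogeneity-one description of the Koiso--Cao soliton. On the dense open set where the $U(2)$-action has principal orbits, write
$$
g = dt^2 + a(t)^2\,\sigma_1^2 + b(t)^2\,(\sigma_2^2+\sigma_3^2),\qquad f=f(t),
$$
where $\sigma_1,\sigma_2,\sigma_3$ is the standard left-invariant coframe on $S^3$ and $\sigma_1$ is dual to the Hopf direction. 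Let $\{e_1,e_2,e_3,e_4\}=\{\partial_t,\ a^{-1}X_1,\ b^{-1}X_2,\ b^{-1}X_3\}$ be the dual orthonormal frame.

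\textbf{Hessian of $f$.} Since $f$ depends only on $t$, $\Hess f = f''\,dt^2 + f'\,\tfrac12\mathcal L_{\partial_t}(g|_{S^3})$. The second fundamental forms of the orbits are diagonal in this frame, so
$$
\Hess f=\operatorname{diag}\bigl(f'',\ f'a'/a,\ f'b'/b,\ f'b'/b\bigr).
$$
By \eqref{eqn:sol}, $\Ric$ is diagonal in the same frame. Thus everything reduces to checking that this diagonalizing frame is in fact a normal form frame for $\hat S$. Equivalently, by \eqref{eqn:CTsimple}, $\hat S$ should be block diagonal in the Hodge basis built from it, with diagonal blocks $A\pm B$.

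\textbf{Curvature of the metric.} This is the main step. For metrics of this $U(2)$-invariant type, the standard formulas (e.g.\ those used for Taub--NUT or Berger-sphere warped metrics) show that $\co$ is diagonal in the Hodge basis up to the off-diagonal block $K$. With the symmetry $X_2\leftrightarrow X_3$, each of $\hat W^\pm$ is diagonal, of the form $\operatorname{diag}(\alpha,\beta,\beta)$. The block $K=-2C$ is then diagonal, because $C$ only involves the off-diagonal $f_{ij}$, which vanish, together with diagonal differences of $f_{ii}$. Hence $\co$ has the form
$$
\begin{bmatrix}A' & K\\ K & A''\end{bmatrix},
$$
with all three blocks diagonal. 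Next I add $\frac12\hat H$. Its off-diagonal block is exactly $C=-\tfrac12K$, and $\frac12 C=-\frac14 K$ does not cancel $K$ by itself. The correct accounting comes from \eqref{eqn:CTsimple}: the off-diagonal block of $\co$ equals $-\tfrac12 C$ precisely when $\hat S$ is block diagonal. So I have to verify that the traceless Ricci part of $\co$ matches $-\tfrac12 C$, and this is immediate from \eqref{eqn:sol} because $\Ric^\circ=-(\Hess f)^\circ$.

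\textbf{Conclusion.} The frame therefore gives $\hat S=\operatorname{diag}(A+B,A-B)$ with $A,B$ diagonal. Each $e_{ij}$ then satisfies $\hat S e_{ij}\in\operatorname{span}\{e_{ij},*e_{ij}\}$, so it is critical for $\hat S$. When eigenvalues are repeated (here $\beta$ appears twice), the normal form frame of \cite{cao} is not unique, and I simply choose this one. Corollary \ref{sharedframe} then gives criticality for $\Rsec$. At the exceptional orbits, where $a$ or $b$ vanishes, I extend by continuity or by the $U(2)$-symmetry, since $\Hess f$ is still diagonal there. The main obstacle is ensuring that this particular diagonalizing frame, rather than some other normal form frame from \cite{cao}, is admissible. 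The symmetry argument above handles this by showing directly that $\hat S$ is block diagonal in it.
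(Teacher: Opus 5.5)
Your proof is correct, but at the decisive step it takes a different route from the paper's proof of Theorem~\ref{thm:KC}.

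\textbf{Common ground.} Both arguments use the same $U(2)$-adapted frame. Both get $\Hess f$, hence $\Ric$, diagonal: you compute the second fundamental form of the orbits, while the paper appeals to the properties of that frame. Both then only need the diagonal Hodge blocks $S_\pm$ of $\hat S$ to be diagonal. The vanishing of the off-diagonal block is the general Cao--Tran fact $\mathring{\Ric}=-\mathring{\Hess}f$, and your final accounting $K=-\tfrac12 C$ is the one consistent with \eqref{eqn:CTsimple}.

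\textbf{The decisive step.} The paper argues structurally. $S_+$ is diagonal because $\omega=\sqrt2\,\xi_1^+$ and $S_+$ has the $2+1$ K\"ahler spectrum of Proposition~\ref{patterns}. $S_-$ is diagonal because it is equivariant under the $U(1)$ isotropy. You instead import the known curvature structure of biaxial Bianchi~IX metrics.

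\textbf{What each route buys.} Yours never uses the K\"ahler condition. Moreover, $\hat H$ is already diagonal on $\{e_{ij}\}$ in this frame, so the fact you quote gives $\hat R(e_{ij})\in\operatorname{span}\{e_{ij},*e_{ij}\}$ directly. The passage through Corollary~\ref{sharedframe} is then essentially a formality. The paper's route avoids any curvature computation.

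\textbf{The cost.} Your load-bearing claim is cited rather than proved, and the symmetry you name does not prove it. A linear isometry fixing $e_1,e_2$ and swapping $e_3,e_4$ reverses orientation and exchanges $\Lambda^+$ with $\Lambda^-$. The orientation-preserving variant $(e_1,e_2,e_3,e_4)\mapsto(e_1,-e_2,e_4,e_3)$ acts by $\xi_1^\pm\mapsto-\xi_1^\pm$, $\xi_2^\pm\leftrightarrow\xi_3^\pm$. A self-adjoint operator commuting with it may still mix $\xi_1^\pm$ with $\xi_2^\pm-\xi_3^\pm$. So it only forces $\beta$ to repeat once diagonality is already known.

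\textbf{How to close it.} The clean justification is the isotropy argument, which handles both blocks at once. Rotating $\{e_3,e_4\}$ fixes $\xi_1^\pm$ and rotates $\operatorname{span}\{\xi_2^\pm,\xi_3^\pm\}$ by the same angle. Hence each self-adjoint, equivariant $\hat W^\pm$ preserves $\mathbb{R}\xi_1^\pm$ and acts as a scalar on the complementary plane. That is exactly your $\operatorname{diag}(\alpha,\beta,\beta)$.
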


\begin{proof}
This is proved in Theorem \ref{thm:KC} below.
\end{proof}

Short of Corollary \ref{sharedframe}, one would need some knowledge of the \emph{second} derivatives of $\Rsec$, too:

\begin{prop}
Suppose that some $P_i$ in Proposition \ref{mainthm} is critical for \emph{$\Rsec$}. Then so is $*P_i$, and $\co$ is determined by the two critical values \emph{$\Rsec(P_i),\Rsec(*P_i)$} together with the Hessian of \emph{$\Rsec$} at $P_i$.
\end{prop}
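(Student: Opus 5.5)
Fix $i=1$ (the other cases are cyclic), so $P_1=e_{12}$ and $*P_1=e_{34}$. The first claim follows from Proposition \ref{mainthm}. Criticality of $P_1$ is equivalent to $f_{13}=f_{14}=f_{23}=f_{24}=0$. This condition is symmetric under swapping the index pairs $\{1,2\}\leftrightarrow\{3,4\}$. Now apply the same computation of $\hat{H}$ to $e_{34}$ via \eqref{eqn:Hcoord}: its components off $\operatorname{span}\{e_{12},e_{34}\}$ involve exactly the entries $f_{13},f_{14},f_{23},f_{24}$. Since $\hat{S}(*P_1)=*\hat{S}(P_1)=b_1P_1+a_1(*P_1)$, the criterion ``$\co Q\in\operatorname{span}\{Q,*Q\}$'' applied to $Q=*P_1$ is equivalent to the same vanishing condition. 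Hence $*P_1$ is critical. The critical values are
\[
\Rsec(P_1)=\ipr{\co P_1}{P_1}=a_1+\tfrac12(f_{11}+f_{22}), \qquad \Rsec(*P_1)=a_1+\tfrac12(f_{33}+f_{44}),
\]
read off from the diagonal of $\hat{H}$.

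For the determination statement, I would write $\co$ in the basis $\{e_{ij}\}$ and count the unknowns, using $\co=\hat{S}-\frac12\hat{H}$. The $\hat{S}$ part contributes $a_1,a_2,a_3,b_1,b_2,b_3$; the $\hat{H}$ part contributes the remaining Hessian entries $f_{ij}$, and under our hypothesis only $f_{12},f_{34}$ and the diagonal survive. The two critical values give $\Rsec(P_1)-\Rsec(*P_1)=\tfrac12(f_{11}+f_{22}-f_{33}-f_{44})$ and, together with $a_1$, fix one combination. The remaining data must come from the Hessian of $\Rsec$ at $P_1$.

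To use that Hessian, I would parametrize $\mathrm{Gr}_2$ near $P_1$ by the 4-dimensional tangent space $T_{P_1}\mathrm{Gr}_2\cong\mathrm{Hom}(P_1,P_1^\perp)$. A curve through $P_1$ is $(e_1+s u)\wedge(e_2+tv)$ with $u,v\in\operatorname{span}\{e_3,e_4\}$, normalized. The standard second-variation formula for the quadratic form $Q(P)=\ipr{\co P}{P}$ restricted to unit decomposable vectors (as in \cite{thorpe2}) gives
\[
\Hess\,\Rsec|_{P_1}(X,X)=2\ipr{\co X}{X}-2\Rsec(P_1)|X|^2+(\text{terms from }\ipr{\co P_1}{*P_1}\text{ and the Pl\"ucker constraint}),
\]
where $X$ ranges over $\operatorname{span}\{e_{13},e_{14},e_{32},e_{42}\}$. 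The Pl\"ucker constraint $X\wedge X$ is what brings in $b_1=\ipr{\co P_1}{*P_1}$, as a Lagrange multiplier. So the Hessian, a symmetric $4\times4$ form, encodes the entire block of $\co$ on $\operatorname{span}\{e_{13},e_{14},e_{23},e_{24}\}$, shifted by known multiples of $\Rsec(P_1)$ and $b_1$.

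The remaining step is to check that $\co$ is recovered. The entries $\ipr{\co e_{12}}{e_{13}}$, etc., vanish by criticality. The entry $\ipr{\co e_{12}}{e_{34}}=b_1$ must be recovered as the multiplier, which I would read off from the trace or the $*$-twisted part of the Hessian. The $4\times4$ block gives $a_2,a_3,b_2,b_3$ together with $f_{12},f_{34}$ and the needed diagonal combinations, via the explicit $\hat{H}(e_{13}),\hat{H}(e_{14})$ formulas. The main obstacle is this bookkeeping: showing that the Hessian's Lagrange term isolates $b_1$ unambiguously, and that the combinations $a_1$ and the individual diagonal $f_{jj}$ that enter $\co$ are fixed by the two critical values plus the Hessian. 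If this is underdetermined, I would fall back on the algebraic Bianchi identity $b_1+b_2+b_3=0$ and the Einstein-like relation of \eqref{eqn:CTsimple} to close the count.
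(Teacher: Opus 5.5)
Your proof that $*P_1$ is critical is the paper's argument: $P_1$ and $*P_1$ have the same obstruction set $\{f_{13},f_{14},f_{23},f_{24}\}$, and $\hat S(*P_1)\in\operatorname{span}\{P_1,*P_1\}$. For the determination claim, the paper proves nothing itself. It cites \cite[Theorem~9]{aazami2}, a fact about arbitrary curvature operators on oriented $4$-manifolds. You instead try to derive that fact, and your sketch stops at the one step that carries the content: recovering $b_1=\langle\co P_1,*P_1\rangle$.

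Make the second variation precise. Use your normalization $\Rsec(P)=\langle\co P,P\rangle$, and set $\alpha=\langle\co P_1,P_1\rangle$ and $T=\operatorname{span}\{e_{13},e_{14},e_{42},e_{23}\}=T_{P_1}\mathrm{Gr}_2$. The Lagrange function $\langle\co P,P\rangle-\alpha(|P|^2-1)-b_1\langle P,*P\rangle$ gives
\[
\mathcal H(X,Y)=2\big\langle(\co-\alpha I-b_1{*})X,Y\big\rangle,\qquad X,Y\in T.
\]

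Your first suggestion, the trace, fails. On $T$, $*$ swaps $e_{13}\leftrightarrow e_{42}$ and $e_{14}\leftrightarrow e_{23}$, so it is traceless. Hence $\operatorname{tr}\mathcal H=2\operatorname{tr}(\co|_T)-8\alpha$ never sees $b_1$. The $*$-component alone also fails, because it mixes $b_1$ with the unknown entries $\langle\co e_{13},e_{42}\rangle$ and $\langle\co e_{14},e_{23}\rangle$.

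So the algebraic Bianchi identity is not a fallback but the essential input. It says those two entries sum to $-b_1$, whence
\[
\mathcal H(e_{13},e_{42})+\mathcal H(e_{14},e_{23})=-2b_1-4b_1=-6b_1 .
\]
This determines $b_1$, and then:
\begin{itemize}
\item $\co|_T=\tfrac12\mathcal H+\alpha I+b_1{*}$;
\item the mixed blocks between $T$ and $\operatorname{span}\{P_1,*P_1\}$ vanish because both planes are critical;
\item $\langle\co *P_1,*P_1\rangle$ is the second critical value.
\end{itemize}
With this step supplied, your route is a self-contained proof of the cited theorem.

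Separately, drop the bookkeeping in the parameters $(a_i,b_i,f_{ij})$; it cannot succeed as stated. Replacing $\text{Hess}f$ by $\text{Hess}f-cg$ shifts $\hat H$ by $+2cI$, since the operator built from $g$ in place of $\text{Hess}f$ is $-2I$. Together with $a_i\mapsto a_i+c$, this leaves $\co=\hat S-\tfrac12\hat H$ unchanged. So neither the individual $a_i$ nor the individual $f_{jj}$ are determined by $\co$, contrary to your claim that the $4\times4$ block ``gives $a_2,a_3$''. Likewise, using the unknown $a_1$ together with $\Rsec(P_1)-\Rsec(*P_1)$ to ``fix a combination'' is circular, since $a_1$ is not part of the data. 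The determination claim concerns only the entries of $\co$; the soliton structure is needed only for the criticality of $*P_1$.
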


\begin{proof}
By \cite[Theorem~9]{aazami2}, if $P_i,*P_i$ are both critical 2-planes of $\Rsec$, then $\co$ can be determined from the critical values $\Rsec(P_i), \Rsec(*P_i)$, and the Hessian of $\Rsec$ at $P_i$ (this is true at any point of an oriented Riemannian 4-manifold). Thus we need only show that $*P_i$ is critical for $\Rsec$. But if $P_i$ is critical for $\Rsec$, then the same off-diagonal $\text{Hess}f$ entries that obstruct $P_i$ also obstruct $*P_i$ (e.g., the obstruction set for $P_1 = \ww{e_1}{e_2}$ and $*P_1 = \ww{e_3}{e_4}$ is the same four entries $f_{13},f_{14},f_{23},f_{24}$; similarly with the others). It follows that $*P_i$ is likewise critical for $\Rsec$.
\end{proof}

(Thus whenever one pair $P_i,*P_i$ is critical for $\Rsec$, the combination of the $\hat S$-normal form with the Hessian of $\Rsec$ at $P_i$ recovers $\co$ completely. In this sense, the $\hat{S}$-normal form plus one second-variation datum plays the role that a genuine normal form for $\co$ would play.) We close this section by placing $\hat{S}$ in the classical framework of \cite{thorpe4,zoltek}; in particular, we show that the critical pairs $\{P_i,*P_i\}$ are intrinsic Grassmannian slices of a canonical kernel of $\hat{S}$. Set $E_i\defeq \operatorname{span}\{P_i,*P_i\}$ and $G \defeq \text{Gr}_2(T_pM)$. Since $P_i,*P_i$ are critical for $\hat{S}$, we know that
$$
\hat S(P_i)=a_iP_i+b_i(*P_i) \comma \hat S(*P_i)=b_iP_i+a_i(*P_i).
$$
In particular,
$$
(\hat S-a_iI-b_i*)(E_i)=0.
$$
Hence $E_i\subseteq \ker(\hat S-a_iI-b_i*)$, and therefore
$$
\{\pm P_i,\pm *P_i\}=G\cap E_i\subseteq G\cap \ker(\hat S-a_iI-b_i*).
$$
Moreover, $G\cap E_i=\{\pm P_i,\pm (*P_i)\}$ because any unit bivector of the form $xP_i+y(*P_i)$ is decomposable only when $xy=0$. Thus, if the eigenvalues $a_i\pm b_i$ of $A\pm B$ are both simple, then $\ker(\hat S-a_iI-b_i*)=E_i$, so that
$$
G\cap \ker(\hat S-a_iI-b_i*)=\{\pm P_i,\pm *P_i\}.
$$
Without this simplicity hypothesis, the kernel may be larger, but the pair $\{P_i,*P_i\}$ always sits canonically inside it.

\section{The Spectrum of $\hat{S}=\co + \frac{1}{2}\hat{H}$}
\label{sec:proof}
In the Hodge basis, we saw in \eqref{eqn:CTsimple} that
$$
\what S=
\begin{bmatrix}
S_+&0\\0&S_-
\end{bmatrix},
\comma
S_+\defeq A+B \comma S_-\defeq A-B.
$$
Since
\beqa
\label{eqn:W}
\what W^{\pm}=S_{\pm}+\Bigl(\frac{\Delta f}{4}-\frac{\text{scal}}{12}\Bigr)I \comma
\text{scal}+\Delta f=4\lambda,
\eeqa
we also have
\begin{equation}\label{shift}
S_{\pm}=\what W^{\pm}+\Big(\frac{\text{scal}}{3}-\lambda\Big)I.
\end{equation}
This makes several familiar structures visible as spectral patterns of $S_\pm$.

\begin{prop}\label{patterns}
At any point of a gradient Ricci $4$-soliton:
\begin{itemize}
\item[i.] If $g$ is K\"ahler and the orientation is the complex orientation, then the spectrum of $S_+$ is
$$
\Bigl\{\frac{\emph{\text{scal}}}{2}-\lambda,\ \frac{\emph{\text{scal}}}{4}-\lambda,\ \frac{\emph{\text{scal}}}{4}-\lambda\Bigr\}\cdot
$$
In particular, the self-dual block has a $2+1$ eigenvalue pattern.
\item[ii.] If $W^+=0$, then $S_+=\bigl(\frac{\emph{\text{scal}}}{3}-\lambda\bigr)I$; if $W^-=0$, then $S_-=\bigl(\frac{\emph{\text{scal}}}{3}-\lambda\bigr)I$.
\end{itemize}
\end{prop}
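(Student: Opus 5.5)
The plan is to read both parts directly off the shift identity \eqref{shift}, $S_{\pm}=\what W^{\pm}+\bigl(\frac{\text{scal}}{3}-\lambda\bigr)I$. This identity comes from \eqref{eqn:W} together with the trace of the soliton equation \eqref{eqn:sol}, which gives $\text{scal}+\Delta f=4\lambda$. The key observation is that \eqref{shift} expresses $S_\pm$ as $\what W^\pm$ plus a scalar multiple of the identity. Hence the spectrum of $S_\pm$ is the spectrum of $\what W^\pm$ translated by $\frac{\text{scal}}{3}-\lambda$, so everything reduces to known facts about the Weyl operator.

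For part ii, the argument is immediate. If $W^+=0$, then $\what W^+=0$, and \eqref{shift} gives $S_+=\bigl(\frac{\text{scal}}{3}-\lambda\bigr)I$. The same argument applies verbatim to $S_-$ when $W^-=0$.

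For part i, I will invoke the classical fact that on a K\"ahler surface with the complex orientation, the self-dual Weyl operator satisfies
$$
\what W^+=\text{diag}\Bigl(\frac{\text{scal}}{6},-\frac{\text{scal}}{12},-\frac{\text{scal}}{12}\Bigr).
$$
This is written in a Hodge basis in which $\xi_1^+$ is proportional to the K\"ahler form $\omega$. Adding $\frac{\text{scal}}{3}-\lambda$ to each entry yields $\frac{\text{scal}}{2}-\lambda$ and $\frac{\text{scal}}{4}-\lambda$, the latter with multiplicity two, which is exactly the claimed $2+1$ pattern.

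The only step needing justification is this Weyl spectrum, together with matching the paper's sign and normalization conventions, since $\co$ here carries $+\frac{\text{scal}}{12}I$ on the diagonal blocks. I would derive it as follows. In the K\"ahler case the curvature operator annihilates $\omega^\perp\cap\Lambda^+$, because $\co$ takes values in $\mathfrak{u}(2)=\text{span}\{\omega\}\oplus\Lambda^-$. Moreover $\ipr{\co\omega}{\omega}$ is proportional to scal, and with the normalization $|\omega|=1$ it equals $\frac{\text{scal}}{4}$. In the block form of $\co$, the $\Lambda^+$-block is therefore $\text{diag}\bigl(\frac{\text{scal}}{4},0,0\bigr)$.

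Subtracting $\frac{\text{scal}}{12}I$ gives the stated $\what W^+$. That $\what W^+$ is indeed diagonal in this basis also follows from this computation. Consistency is checked by the fact that $\what W^+$ is trace-free, since $\frac{\text{scal}}{4}-3\cdot\frac{\text{scal}}{12}=0$. Finally, the eigenvalues $\frac{\text{scal}}{4}-\lambda$ and $\frac{\text{scal}}{2}-\lambda$ of $S_+$ are then read off.
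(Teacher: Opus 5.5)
Your proposal is correct and follows the paper's proof: part ii is read off directly from \eqref{shift}, and part i is the K\"ahler spectrum $\{\frac{\text{scal}}{6},-\frac{\text{scal}}{12},-\frac{\text{scal}}{12}\}$ of $\hat{W}^+$ shifted by $\frac{\text{scal}}{3}-\lambda$. The only difference is that the paper cites this Weyl spectrum from Derdzi\'nski, whereas you derive it from the fact that $\co$ takes values in $\mathfrak{u}(2)=\RR\omega\oplus\Lambda^-$ and hence kills $\omega^\perp\cap\Lambda^+$; in that derivation it is the tracelessness of $\hat{W}^+$ that actually forces $\ipr{\co\xi_1^+}{\xi_1^+}=\frac{\text{scal}}{4}$, so it is a step of the argument rather than merely a consistency check.
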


\begin{proof}
For i., on a K\"ahler surface in the complex orientation the eigenvalues of $\what W^+$ are given by \cite{derd2}:
$$
\Big\{\frac{\text{scal}}{6},\ -\frac{\text{scal}}{12},\ -\frac{\text{scal}}{12}\Big\}\cdot
$$
Adding the scalar shift from \eqref{shift} gives the stated spectrum. Part ii.~follows immediately from \eqref{shift}.
\end{proof}

The quantity
$$
|S_+|^2-|S_-|^2=4\sum_{i=1}^3 a_ib_i
$$
is the pointwise signature density. It shows that purity (Proposition \ref{prop:pure}) is a strong sufficient condition for $\tau(M)=0$, but not a necessary one, as vanishing signature only requires the global balance
$$
\int_M (|S_+|^2-|S_-|^2)\,dV_g=0.
$$

We now proceed to our main result. Proposition \ref{patterns} shows that in the K\"ahler case the self-dual block of $\hat S$ has the same $2+1$ spectral pattern that appears in the classical $4$-dimensional K\"ahler theory. This suggests trying to import the argument in \cite{johnson} for K\"ahler curvature operators with positive-definite Ricci tensor. Of course, \cite{johnson} is a statement about $\co$, whereas the normal form of \cite{cao} is a statement about $\hat S=\co+\tfrac12\hat H$. Thus one would need the $\hat{S}$-normal form basis $\{\ww{e_i}{e_j}\}$ to be adapted to $\co$, i.e., to be in the setting of Corollary~\ref{sharedframe}. In light of the fact that the Koiso--Cao soliton on $\mathbb{CP}^2\#(-\mathbb{CP}^2)$ is K\"ahler and has positive Ricci operator, it is the natural $4$-dimensional example to test (by way of contrast, the noncompact Feldman--Ilmanen--Knopf shrinker \cite{feldman} has mixed Ricci sign near the zero section).

\begin{thm}
\label{thm:KC}
On the open dense regular set $S^3\times(\alpha,\beta)$ of the Koiso--Cao soliton, let
\beqa
\label{eqn:orozco}
e_1\defeq H \comma e_2\defeq JH=\frac{X}{f} \comma e_3\defeq \frac{Y}{h} \comma e_4\defeq Je_3=\frac{Z}{h}\cdot
\eeqa
Then $\{\ww{e_i}{e_j}\}$ is a common normal form basis for $\hat S$ and $\co$.
\end{thm}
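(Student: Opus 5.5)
We work with the standard cohomogeneity-one description of the Koiso--Cao soliton on the regular set $S^3\times(\alpha,\beta)$:
$$g = dt^2 + f(t)^2\sigma_1^2 + h(t)^2(\sigma_2^2+\sigma_3^2),$$
where $\{X,Y,Z\}$ are the left-invariant fields on $S^3$ dual to $\{\sigma_1,\sigma_2,\sigma_3\}$ and $H=\partial_t$. We write $\phi$ for the potential in place of the letter $f$, since $f$ here denotes a warping function, and note that $\phi=\phi(t)$ depends only on $t$. The strategy is to reduce the claim to Corollary~\ref{sharedframe}. It therefore suffices to show two things: that the frame \eqref{eqn:orozco} is a normal form basis for $\hat S$, and that $\operatorname{Hess}\phi$ is diagonal in it. Corollary~\ref{sharedframe} then says that $P_1,P_2,P_3$ are critical for $\Rsec$. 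The proof of the following Proposition shows that $*P_i$ are critical as well, so all six $\ww{e_i}{e_j}$ are critical and $\co$ has a normal form in the same frame.

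\textbf{Step 1: the Hessian is diagonal.} Since $\phi=\phi(t)$, we have $\operatorname{Hess}\phi=\phi''\,dt^2+\phi'\,\tfrac12\mathcal L_{\partial_t}(g|_{S^3})$. The second term equals $\phi'\big(ff'\sigma_1^2+hh'(\sigma_2^2+\sigma_3^2)\big)$. In the orthonormal frame $e_1=H$, $e_2=X/f$, $e_3=Y/h$, $e_4=Z/h$ this gives
$$\operatorname{Hess}\phi=\operatorname{diag}\big(\phi'',\ \phi' f'/f,\ \phi' h'/h,\ \phi' h'/h\big).$$
In particular all $\phi_{ij}$ with $i\neq j$ vanish. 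By the soliton equation \eqref{eqn:sol}, $\Ric$ is diagonal as well.

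\textbf{Step 2: the frame is a normal form basis for $\hat S$.} By \eqref{eqn:CTsimple}, we must show that in the Hodge basis built from this frame, $\hat S$ is block diagonal with each block diagonal. Block diagonality is automatic, since $\hat S$ commutes with $*$. For diagonality, we use the standard fact that for a $U(2)$-invariant (diagonal Bianchi IX) metric of this form, $\co$ itself is diagonal on the basis $\ww{e_i}{e_j}$, apart from the pairings $e_{12}\leftrightarrow e_{34}$, $e_{13}\leftrightarrow e_{42}$, $e_{14}\leftrightarrow e_{23}$. This is verified via the Cartan structure equations with $d\sigma_1=2\sigma_2\wedge\sigma_3$ (cyclically): the connection $1$-forms pair $dt$ with $\sigma_i$ in the same way $\sigma_j$ pairs with $\sigma_k$, so the curvature $2$-forms $\Omega_{1i}$ and $\Omega_{jk}$ lie in $\operatorname{span}\{e^{1i},e^{jk}\}$.

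Consequently $\co$ has the form $\begin{bmatrix}A_0&B_0\\B_0&A_0'\end{bmatrix}$ with all blocks diagonal, and hence each $\Lambda^\pm$ block is diagonal in $\xi_i^\pm$. In fact $W^\pm$ are diagonal, and this can be confirmed against the K\"ahler $2+1$ pattern of Proposition~\ref{patterns}. By Step 1, $C$ is diagonal and $\hat H$ is diagonal in the Hodge basis. Therefore $\hat S=\co+\tfrac12\hat H$ has the required form, with $A,B$ diagonal. It then follows from the characterization "$P$ critical iff $\hat SP\in\operatorname{span}\{P,*P\}$" that each $\ww{e_i}{e_j}$ is critical for $\hat S$.

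The main obstacle is the curvature computation in Step 2, where the diagonal form of $\co$ must be verified rather than assumed. Concretely, this means computing $\Omega_{ij}$ from $\omega_{12}=-f'\sigma_1$, $\omega_{13}=-h'\sigma_2$, and so on, and checking that no cross terms such as $e^{12}$ appear in $\Omega_{13}$. The $U(1)\times SU(2)$ symmetry should force this: the isotropy acts on $\Lambda^2$ with the invariant splitting used above. I would present the symmetry argument and keep the explicit structure-equation computation as a check. Finally, if the "Furthermore" of the theorem is intended, Johnson's criterion applies to give the consequence about algebraic K\"ahler curvature operators, because $J e_1=e_2$, $Je_3=e_4$ and $\Ric>0$.
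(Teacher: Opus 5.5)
Your proof is correct, but your Step 2 runs in the opposite logical direction from the paper's.

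The paper never computes curvature. It diagonalizes $S_+$ using the K\"ahler structure: $J$-invariance of $\co$ and $\hat H$, plus the $2+1$ spectrum of Proposition~\ref{patterns}. It diagonalizes $S_-$ by a Schur-lemma argument for the $U(1)$ isotropy acting on $\Lambda^-=\RR\xi_1^-\oplus\operatorname{span}\{\xi_2^-,\xi_3^-\}$. Only then does it transfer the normal form from $\hat S$ to $\co$ via Corollary~\ref{sharedframe}.

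You instead use the structure equations of a diagonal Bianchi IX metric to show that $\co$ itself preserves each $\operatorname{span}\{e_{ij},*e_{ij}\}$. By the criterion that $P$ is critical iff $\co P\in\operatorname{span}\{P,*P\}$, this already makes all six $e_{ij}$ critical for $\Rsec$. The $\hat S$-normal form then follows because $\hat H$ is diagonal in $\{e_{ij}\}$. So in your route Corollary~\ref{sharedframe} and the $*P_i$ argument are superfluous.

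Your route also uses neither the K\"ahler condition nor the equality of the last two warping functions. It therefore applies to any diagonal Bianchi IX gradient soliton whose potential depends only on $t$. The price is an explicit (short) connection computation, whereas the paper's argument is computation-free but tied to the $U(2)$-invariant K\"ahler setting.

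Two small corrections.

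First, the symmetry shortcut you propose to present in place of the computation does not work as phrased. The continuous $U(1)$ isotropy rotates $e_3$ into $e_4$, so it mixes $e_{13}$ with $e_{14}$ and $e_{42}$ with $e_{23}$. It only preserves $\operatorname{span}\{e_{12},e_{34}\}\oplus\operatorname{span}\{e_{13},e_{14},e_{42},e_{23}\}$, and the second summand is two copies of the weight-one representation. Schur's lemma then still allows $\langle\co e_{13},e_{23}\rangle=\langle\co e_{14},e_{24}\rangle\neq 0$. That entry is killed by $\Ric(e_1,e_2)=0$ from your Step 1, not by the isotropy.

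A genuine symmetry argument uses the discrete isometries induced by conjugation by $i,j,k$ on $S^3=Sp(1)$. Over the identity these act on the frame by $\operatorname{diag}(1,1,-1,-1)$, $\operatorname{diag}(1,-1,1,-1)$ and $\operatorname{diag}(1,-1,-1,1)$. Invariance under them forces $R_{ijkl}=0$ unless $\{i,j\}=\{k,l\}$ or $\{i,j\}\cap\{k,l\}=\emptyset$, and left translations propagate this along each orbit. Either use that, or keep the structure-equation computation as the actual proof.

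Second, $\hat H$ is diagonal in $\{e_{ij}\}$, not in the Hodge basis: there its off-diagonal block $C$ is diagonal but generally nonzero. This is harmless, since the off-diagonal blocks of $\hat S$ vanish anyway.
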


\begin{proof}
For the properties of the $U(2)$-invariant orthonormal frame \eqref{eqn:orozco} used here, we use \cite{orozco}. For any $U(2)$-invariant function $\psi=\psi(t)$, the Hessian $\Hess\psi$ is diagonal in the orthonormal frame $\{H,\frac{X}{f},\frac{Y}{h},\frac{Z}{h}\}$. In particular $\Hess f$ is diagonal in $\{e_1,e_2,e_3,e_4\}$. By \eqref{eqn:sol}, the Ricci tensor is therefore also diagonal in this frame. Because the metric is K\"ahler, both $\Ric$ and $\Hess f$ are $J$-invariant, so their diagonal entries occur in $J$-pairs. It therefore remains to show that the above frame is a Cao--Tran normal form frame for $\hat S$. Let $\{\xi_1^{\pm},\xi_2^{\pm},\xi_3^{\pm}\}$ be the Hodge basis induced by $\{e_1,e_2,e_3,e_4\}$. Since the metric is K\"ahler and the orientation is the complex orientation, the K\"ahler form is
$$
\omega=\ww{e_1}{e_2}+\ww{e_3}{e_4}=\sqrt{2}\,\xi_1^+,
$$
while $\operatorname{span}\{\xi_2^+,\xi_3^+\}=\Lambda^{2,0}\oplus\Lambda^{0,2}$. Because the metric is K\"ahler and $\text{Hess}f$ is $J$-invariant, both $\co$ and $\hat{H}$ commute with the induced $J$-action on $\Lambda^2$. Hence $\hat{S} = \co + \frac{1}{2}\hat{H}$ is also of K\"ahler type, and so the self-dual block $S_+$ preserves the splitting
$$
\Lambda^+=\RR\omega\oplus (\Lambda^{2,0}\oplus\Lambda^{0,2}).
$$
Moreover, by Proposition~\ref{patterns}, $S_+$ has eigenvalue $\frac{\text{scal}}{2}-\lambda$ on $\RR\omega$ and the repeated eigenvalue $\frac{\text{scal}}{4}-\lambda$ on $\Lambda^{2,0}\oplus\Lambda^{0,2}$. Thus $S_+$ is diagonal with respect to $\{\xi_1^+,\xi_2^+,\xi_3^+\}$. For the anti-self-dual block $S_-$, use the $U(2)$-symmetry of the Koiso--Cao metric. On each regular orbit the isotropy is $U(1)$, which fixes $\operatorname{span}\{e_1,e_2\}$ and rotates $\operatorname{span}\{e_3,e_4\}$. Consequently,
$$
\Lambda^- = \operatorname{span}\{\xi_1^-\}\oplus \operatorname{span}\{\xi_2^-,\xi_3^-\},
$$
where the second summand is the standard real $2$-dimensional rotation representation of $U(1)$. Since $\hat S$ is $U(2)$-invariant, $S_-$ is $U(1)$-equivariant. Being also self-adjoint, it acts by a scalar on $\operatorname{span}\{\xi_2^-,\xi_3^-\}$ and hence is diagonal in the basis $\{\xi_1^-,\xi_2^-,\xi_3^-\}$. Thus both $S_+$ and $S_-$ are diagonal in the Hodge basis $\{\xi_1^{\pm},\xi_2^{\pm},\xi_3^{\pm}\}$, so this frame is a Cao--Tran normal form frame for $\hat S$. Since $\Ric$ is diagonal in the same frame, Corollary~\ref{sharedframe} applies and shows that $\co$ has a normal form in that very frame as well. Finally, although the orthonormal frame above is defined only on the regular set, the pointwise existence of a common normal form extends to the singular orbits by continuity and compactness of $SO(4)$.
\end{proof}

\begin{cor}
\label{cor:Johnson}
The curvature operator of the Koiso--Cao soliton has a normal form relative to the space of algebraic Kähler curvature operators.
\end{cor}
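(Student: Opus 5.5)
The plan is to derive the corollary from Theorem~\ref{thm:KC} together with the normal form theorem of \cite{johnson}. That result says a K\"ahler curvature operator on a $4$-dimensional Hermitian vector space with positive-definite Ricci tensor is determined by the critical points and values of its sectional curvature, within the space of algebraic K\"ahler curvature operators. So the task is to check its hypotheses pointwise on the Koiso--Cao soliton, and then to check that the normal form it produces is the one found in Theorem~\ref{thm:KC}.

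\textbf{Step 1: hypotheses of \cite{johnson}.} The Koiso--Cao metric is K\"ahler. Hence at each $p$ the operator $\co_p$ commutes with the induced $J$-action on $\Lambda^2$, and it satisfies the algebraic Bianchi identity. It is therefore an algebraic K\"ahler curvature operator on $(T_pM,g_p,J_p)$.

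\textbf{Step 2: positivity of $\Ric$.} The soliton is a shrinker, so $\lambda>0$, and its Ricci tensor is positive definite. This is the fact already invoked before Theorem~\ref{thm:KC}. To make it concrete, I would use the diagonal form of $\Ric$ in the frame \eqref{eqn:orozco} established in the proof of Theorem~\ref{thm:KC}. Its entries occur in $J$-pairs $(r_1,r_1,r_2,r_2)$, and positivity of $r_1,r_2$ is the known positivity of Ricci curvature on $\mathbb{CP}^2\#(-\mathbb{CP}^2)$ from \cite{koiso,cao1996}. On the singular orbits, positivity holds by continuity together with compactness: a positive-definite limit is needed there, so I would cite the global positivity rather than argue by limits.

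\textbf{Step 3: apply \cite{johnson} and identify the frame.} Steps 1 and 2 let the theorem of \cite{johnson} apply at every $p$, which gives the normal form relative to algebraic K\"ahler operators. For consistency with Theorem~\ref{thm:KC}, I would check that Johnson's normal form frame can be taken to be the frame \eqref{eqn:orozco}. That frame is unitary, since $e_2=Je_1$ and $e_4=Je_3$, and by Corollary~\ref{sharedframe} its $2$-planes $\ww{e_i}{e_j}$ are critical for $\Rsec$. In addition, Proposition~\ref{patterns} shows that the self-dual block has the K\"ahler $2+1$ pattern. Consequently the $\co$ of \eqref{eqn:CTsimple} has exactly Johnson's shape: $\omega$ is an eigenvector, $\Lambda^{2,0}\oplus\Lambda^{0,2}$ lies in the kernel of the scalar-curvature part, and the anti-self-dual block is diagonal.

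\textbf{Main obstacle.} The hard step is the positivity of $\Ric$ together with its uniformity up to the singular orbits. If the cited facts only give nonnegativity somewhere, Johnson's hypothesis fails. In that case I would instead argue directly: in the $J$-adapted critical frame from Theorem~\ref{thm:KC}, a K\"ahler curvature operator is parametrized by the diagonal data $(\lambda_i,\mu_i)$ subject to the K\"ahler constraints, and these data are read off from the critical values of $\Rsec$. That recovery is the content of \cite{johnson} specialized to this frame.
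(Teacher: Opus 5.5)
Your core argument is exactly the paper's proof: the soliton is K\"ahler and has positive-definite $\Ric$ (the paper takes this from \cite{orozco}), so \cite[Theorem~1.1]{johnson} applies. Nothing from Theorem~\ref{thm:KC} is needed.

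The ``identify the frame'' part of Step~3 should be dropped, because it is false as stated. First, $\omega=\sqrt2\,\xi_1^+$ is an eigenvector of $\hat S$, not of $\co$. In the frame \eqref{eqn:orozco} one has $f_{11}=f_{22}$, $f_{33}=f_{44}$ and all off-diagonal $f_{ij}=0$. Hence $C=\operatorname{diag}(f_{33}-f_{11},0,0)$, and \eqref{eqn:CTsimple} gives
\[
\co\,\xi_1^+=\frac{\text{scal}}{4}\,\xi_1^+-\frac12\bigl(\Ric(e_1,e_1)-\Ric(e_3,e_3)\bigr)\,\xi_1^- .
\]
The anti-self-dual part here is, up to sign, the primitive Ricci form. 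It is nonzero wherever $\mathring{\Ric}\neq0$, and such points exist because the soliton is not K\"ahler--Einstein. Second, the normal-form data of \cite{johnson} (as recalled in the proof) are not a frame. They require at least three nonholomorphic critical planes $Q_i$ with $Q_i+JQ_i$ linearly independent. For the frame planes $e_{13},e_{14},e_{42},e_{23}$ one gets $Q+JQ\in\{\pm\sqrt2\,\xi_2^-,\pm\sqrt2\,\xi_3^-\}$, which span only a $2$-plane.

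Your fallback is a genuinely different and valid route. It is an elementary parameter count, however, not ``\cite{johnson} specialized.'' Let $\co'$ be any algebraic K\"ahler curvature operator for which the six planes $e_{ij}$ are critical, with the same values as for $\co$.
\begin{itemize}
\item Criticality ($\co'P\in\operatorname{span}\{P,*P\}$) makes $\co'$ preserve each $\operatorname{span}\{\xi_i^+,\xi_i^-\}$.
\item The K\"ahler condition forces $\co'\xi_2^+=\co'\xi_3^+=0$.
\item This leaves a symmetric $2\times2$ block on $\operatorname{span}\{\xi_1^\pm\}$ with entries $p,q,r$, plus scalars $d_2,d_3$ on $\xi_2^-,\xi_3^-$.
\item The Bianchi identity gives $p=r+d_2+d_3$.
\end{itemize}
The values of the quadratic form at $e_{12},e_{34},e_{13},e_{14}$ are $\tfrac12(p+r)\pm q$, $\tfrac12 d_2$ and $\tfrac12 d_3$. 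These determine all four remaining parameters, so $\co'=\co$.

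This route needs no positivity of $\Ric$ at all. Instead it rests on Theorem~\ref{thm:KC}: the $U(2)$-symmetry must produce a $J$-adapted frame all of whose coordinate planes are critical, extended to the singular orbits by a limiting argument. The paper's route is independent of Theorem~\ref{thm:KC} and uses only the K\"ahler property together with $\Ric>0$.
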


\begin{proof}
The Koiso--Cao soliton is K\"ahler and has positive-definite Ricci tensor \cite{orozco}. Hence \cite[Theorem~1.1]{johnson} yields a normal form for $\co$ relative to the space of algebraic K\"ahler curvature operators. (Specifically, \cite[Corollary~2.3]{johnson} yields at least two holomorphic critical planes and at least three nonholomorphic critical planes $Q_i$ for $\Rsec$ such that the bivectors $Q_i+JQ_i$ are linearly independent.) 
\end{proof}

\section{The Chern-Gauss-Bonnet formula}
\label{sec:CGB}
In this final section, we compute the Euler characteristic and signature using the $\hat{S}$-normal form; we will show that it does not in general yield any new topological information (for this and the signature formula below, see, e.g., \cite[p.~371]{besse}):
\beqa
\chi(M) \!\!&=&\!\! \frac{1}{8\pi^2}\int_M \text{tr}\Big[\Big(A+B+\frac{\Delta f}{4}I\Big)^{\!2} -\frac{1}{2}CC^t+\Big(A-B+\frac{\Delta f}{4}I\Big)^{\!2}\,\Big]dV_{\scalebox{0.6}{$g$}}\nonumber\\
&=&\!\! \frac{1}{8\pi^2}\int_M \text{tr}\Big[2\Big(A+\frac{\Delta f}{4}I\Big)^{\!2}+2B^2 -\frac{1}{2}CC^t\Big]dV_{\scalebox{0.6}{$g$}}\nonumber\\
&=&\!\! \frac{1}{8\pi^2}\int_M \bigg[2\sum_{i=1}^3\Big(a_i+\frac{\Delta f}{4}\Big)^{\!2}+2\sum_{i=1}^3b_i^2 -\frac{1}{2}\text{tr}(CC^t)\bigg]dV_{\scalebox{0.6}{$g$}}.\nonumber
\eeqa
However, $\text{tr}(CC^t)$ simplifies to
$$
\text{tr}(CC^t) = \sum_{i,j=1}^3 c_{ij}^2 = \sum_{i=1}^4\Big(f_{ii}-\frac{\Delta f}{4}\Big)^{\!2} +2\sum_{i<j}f_{ij}^2 =\Big|\underbrace{\text{Hess}f - \frac{\Delta f}{4}g}_{\text{$\defeq \mathring{\text{Hess}}f$}}\Big|^2, 
$$
so that the Euler characteristic can be written as
\beqa
\label{eqn:chi1}
\chi(M) = \frac{1}{4\pi^2}\int_M \bigg[\sum_{i=1}^3\Big(a_i+\frac{\Delta f}{4}\Big)^{\!2}+\sum_{i=1}^3b_i^2 -\frac{1}{4}|\mathring{\text{Hess}}f|^2\bigg]dV_{\scalebox{0.6}{$g$}}.
\eeqa
Next, we compute the signature, for which we will need the self-dual and anti-self-dual portions \eqref{eqn:W} of the Weyl curvature operator. The Thom-Hirzebruch formula for the signature then gives
\beqa
\tau(M) \!\!&=&\!\! \frac{1}{12\pi^2}\int_M (|\hat{W}^+|^2 - |\hat{W}^-|^2)dV_{\scalebox{0.6}{$g$}}\nonumber\\
&=&\!\! \frac{1}{12\pi^2}\int_M \bigg[\sum_{i=1}^3 \Big(a_i+b_i+\frac{\Delta f}{4}-\frac{\text{scal}}{12}\Big)^{\!2}\nonumber\\
&&\hspace{1in} - \sum_{i=1}^3 \Big(a_i-b_i+\frac{\Delta f}{4}-\frac{\text{scal}}{12}\Big)^{\!2}\bigg]dV_{\scalebox{0.6}{$g$}}\nonumber\\
&=&\!\! \frac{1}{3\pi^2}\int_M \sum_{i=1}^3b_i\Big(a_i+\frac{\Delta f}{4}-\frac{\text{scal}}{12}\Big)dV_{\scalebox{0.6}{$g$}}.\label{eqn:tau1}
\eeqa
Combining \eqref{eqn:chi1} and \eqref{eqn:tau1},
\beqa
\chi(M) - \frac{3}{2}\tau(M) \!\!&=&\!\! \frac{1}{4\pi^2}\int_M \bigg[\sum_{i=1}^3\Big(a_i+\frac{\Delta f}{4}\Big)^{\!2}+\sum_{i=1}^3b_i^2 -\frac{1}{4}|\mathring{\text{Hess}}f|^2\nonumber\\
&&\hspace{.5in}-2\sum_{i=1}^3b_i\Big(a_i+\frac{\Delta f}{4}-\frac{\text{scal}}{12}\Big)\bigg]dV_{\scalebox{0.6}{$g$}} \nonumber\\
&=&\!\! \frac{1}{4\pi^2}\int_M \sum_{i=1}^3 \Big(a_i+\frac{\Delta f}{4}-b_i\Big)^{\!2}\nonumber\\
&&\hspace{0.5in}-\frac{1}{4}|\mathring{\text{Hess}}f|^2+\frac{\text{scal}}{6}\underbrace{(b_1+b_2+b_3)}_{0}\bigg]dV_{\scalebox{0.6}{$g$}},\nonumber
\eeqa
where $b_1+b_2+b_3=R_{3412}+R_{4213}+R_{2314}=0$ via the algebraic Bianchi identity. But this is not helpful; indeed, from \eqref{eqn:W} we have that
$$
\sum_{i=1}^3 \Big(a_i+\frac{\Delta f}{4}-b_i\Big)^{\!2} = \Big|\hat{W}^-+\frac{\text{scal}}{12}I\Big|^2 = |\hat{W}^-|^2 + \frac{\text{scal}^2}{48},
$$
the latter because $\ipr{\hat{W}^-}{I} = \text{tr}(\hat{W}^-) = 0$. Finally, the trace of \eqref{eqn:sol} yields $\text{scal} + \Delta f = 4\lambda$, hence
$$
(\Ric + \text{Hess}f) - \frac{\Delta f}{4}g = \lambda g - \frac{\Delta f}{4}g \imp \mathring{\text{Hess}}f = -\mathring{\Ric},
$$
so that $|\mathring{\text{Hess}}f|^2 = |\mathring{\Ric}|^2$. Putting this together, we have
\beqa
\label{eqn:exact}
\chi(M) - \frac{3}{2}\tau(M) = \frac{1}{4\pi^2}\int_M \Big(|\hat{W}^-|^2 + \frac{\text{scal}^2}{48} - \frac{1}{4}|\mathring{\Ric}|^2\Big)dV_{\scalebox{0.6}{$g$}}.
\eeqa
Reversing orientation changes the sign of $\tau(M)$ in \eqref{eqn:exact} and swaps the self-dual and anti-self-dual Weyl blocks, so $|\hat{W}^-|^2$ is replaced by $|\hat{W}^+|^2$. This relationship, however, is already well known to hold for any compact Riemannian 4-manifold (see, e.g., \cite[Eq.~(4)]{lebrun0}). Indeed, even the fact that (non-flat) compact gradient Ricci 4-solitons must have $\chi(M) > 0$ \cite{derd} does not follow from \eqref{eqn:chi1}.

\section*{References}
\renewcommand*{\bibfont}{\footnotesize}
\printbibliography[heading=none]
\end{document}